\newcommand{\R}{\mathbb{R}}
\newcommand{\C}{\mathbb{C}}
\newcommand{\N}{\mathbb{N}}
\newcommand{\Z}{\mathbb{Z}}
\numberwithin{equation}{section}
\theoremstyle{plain}
\newtheorem{thm}[equation]{Theorem}
\newtheorem{lem}[equation]{Lemma}
\newtheorem{prop}[equation]{Proposition}
\theoremstyle{definition}
\newtheorem{ass}[equation]{Assumption}
\theoremstyle{remark}
\newtheorem{rem}[equation]{Remark}
\title[Bi-parameter square functions in the upper half-space]{Boundedness of a class of bi-parameter square functions in the upper half-space}
\author{Henri Martikainen}
\address{D\'epartement de Math\'ematiques, B\^atiment 425, Facult\'e des Sciences d'Orsay, Universit\'e Paris-Sud 11, F-91405 Orsay Cedex}
\email{henri.martikainen@math.u-psud.fr}
\thanks{The author is supported by the Emil Aaltonen Foundation, and wishes to thank Universit\'e Paris-Sud 11, Orsay, for its hospitality.}
\subjclass[2010]{42B20}
\keywords{Square function, bi-parameter, $T1$ theorem}
\begin{document}
\maketitle

\begin{abstract}
We consider a class of bi-parameter kernels and related square functions in the upper half-space, and give an efficient proof of a boundedness criterion for them. The proof
uses modern probabilistic averaging methods and is based on controlling double Whitney averages over good cubes.
\end{abstract}

\section{Introduction}
In this paper we introduce a class of bi-parameter kernels $(t_1, t_2, x_1, x_2, y_1, y_2) \mapsto s_{t_1, t_2}(x_1, x_2, y_1, y_2)$, where $t_1, t_2 > 0$ and $x = (x_1, x_2), y = (y_1, y_2) \in \R^{n+m}$.
These kernels are assumed to satisfy a natural size estimate, a H\"older estimate and two symmetric mixed H\"older and size estimates. We also assume certain mixed Carleson and size estimates, mixed Carleson and H\"older estimates and a bi-parameter Carleson condition. Under these conditions
we show the square function bound
\begin{displaymath}
\iint_{\R^{m+1}_+} \iint_{\R^{n+1}_+} |\theta_{t_1, t_2} f(x_1, x_2)|^2 \frac{dx_1dt_1}{t_1}\frac{dx_2dt_2}{t_2} \lesssim \|f\|_{L^2(\R^{n+m})}^2,
\end{displaymath}
where
\begin{displaymath}
\theta_{t_1, t_2} f(x_1,x_2) = \iint_{\R^{n+m}} s_{t_1, t_2}(x_1, x_2, y_1, y_2)f(y_1, y_2)\,dy_1\,dy_2.
\end{displaymath}

Compared to the bi-parameter Calder\'on--Zygmund theory the square function case is significantly cleaner. Indeed, the amount of needed symmetries and conditions are greatly reduced. Moreover, one
encounters only one \emph{full} paraproduct -- not four. In particular, some demanding aspects related to mixed full paraproducts arising from partial adjoints of Calder\'on--Zygmund operators
are not present here.

Recently the author together with M. Mourgoglou \cite{MM} proved a boundedness criterion for one-parameter square functions with general measures. The key to the short proof is
based on a new averaging identity over good Whitney regions. The identity is a further development of Hyt\"onen's improvement \cite{Hy} of the Nazarov--Treil--Volberg method of random dyadic systems \cite{NTV}.
In this paper we push this efficient proof strategy to the case of two parameters. Probabilistic methods in the bi-parameter Calder\'on--Zygmund setting were first used by the author in \cite{Ma1}. They saw another application
in a joint work with Hyt\"onen \cite{HM}. Even in the probabilistic realm the square function case is cleaner than the corresponding Calder\'on--Zygmund case.

The first $T1$ type theorem for product spaces was proved by Journ\'e \cite{Jo2}. Journ\'e formulated his theorem in the language of vector-valued Calder\'on--Zygmund theory. S. Pott and P. Villarroya \cite{PV}
recently offered a new view -- an alternative framework avoiding the vector-valued assumptions. This ideology of mixing the various conditions (kernel estimates, BMO and weak boundedness property)
was also used in \cite{Ma1} and \cite{HM}. The current paper is a continuation of this theme but in the square function setting. For the corresponding one-parameter square function theory see e.g. the papers by
Christ--Journ\'e \cite{CJ}, Hofmann \cite{Ho} and Semmes \cite{S}.

\section{The main theorem}
\subsection{Bi-parameter square functions}
If $f\colon \R^{n+m} \to \C$, $x = (x_1, x_2) \in \R^{n+m}$ and $t_1, t_2 > 0$, we let
\begin{displaymath}
\theta_{t_1, t_2} f(x) = \iint_{\R^{n+m}} s_{t_1, t_2}(x_1, x_2, y_1, y_2)f(y_1, y_2)\,dy_1\,dy_2.
\end{displaymath}
We are interested in the square function estimate
\begin{displaymath}
\iint_{\R^{m+1}_+} \iint_{\R^{n+1}_+} |\theta_{t_1, t_2} f(x_1, x_2)|^2 \frac{dx_1dt_1}{t_1}\frac{dx_2dt_2}{t_2} \lesssim \|f\|_{L^2(\R^{n+m})}^2.
\end{displaymath}

\begin{ass}[Full standard estimates]
The kernel $s_{t_1, t_2}\colon \R^{n+m} \times \R^{n+m} \to \C$ is assumed to satisfy the \emph{size estimate}
\begin{displaymath}
|s_{t_1, t_2}(x, y)| \lesssim \frac{t_1^{\alpha}}{(t_1 + |x_1-y_1|)^{n+\alpha}} \frac{t_2^{\beta}}{(t_2 + |x_2-y_2|)^{m+\beta}}.
\end{displaymath}
We also assume the \emph{H\"older estimate}
\begin{align*}
|s_{t_1, t_2}(x, y) - s_{t_1, t_2}(x, (z_1, y_2)) - &s_{t_1, t_2}(x, (y_1, z_2)) + s_{t_1, t_2}(x,z)|\\ &\lesssim \frac{|y_1-z_1|^{\alpha}}{(t_1 + |x_1-y_1|)^{n+\alpha}} \frac{|y_2-z_2|^{\beta}}{(t_2 + |x_2-y_2|)^{m+\beta}}
\end{align*}
whenever $|y_1-z_1| < t_1/2$ and $|y_2-z_2| < t_2/2$. Finally, we assume the following two \emph{mixed H\"older and size estimates}:
\begin{displaymath}
|s_{t_1, t_2}(x, y) - s_{t_1, t_2}(x, (y_1, z_2))| \lesssim \frac{t_1^{\alpha}}{(t_1 + |x_1-y_1|)^{n+\alpha}} \frac{|y_2-z_2|^{\beta}}{(t_2 + |x_2-y_2|)^{m+\beta}}
\end{displaymath}
whenever $|y_2-z_2| < t_2/2$, and
\begin{displaymath}
|s_{t_1, t_2}(x, y) - s_{t_1, t_2}(x, (z_1, y_2))| \lesssim \frac{|y_1-z_1|^{\alpha}}{(t_1 + |x_1-y_1|)^{n+\alpha}}  \frac{t_2^{\beta}}{(t_2 + |x_2-y_2|)^{m+\beta}}
\end{displaymath}
whenever $|y_1-z_1| < t_1/2$.
\end{ass}

\begin{ass}[Carleson condition $\times$ standard estimates]
If $I \subset \R^n$ is a cube with sidelength $\ell(I)$, we define the associated Carleson box $\widehat I = I \times (0, \ell(I))$.
We assume the following combinations of Carleson and size conditions: For every cube $I \subset \R^n$ and $J \subset \R^m$ there holds that
\begin{displaymath}
\Big( \iint_{\widehat I}\, \Big| \int_I s_{t_1, t_2}(x_1, x_2, y_1, y_2)\,dy_1\Big|^2 \frac{dx_1dt_1}{t_1} \Big)^{1/2} \lesssim |I|^{1/2}  \frac{t_2^{\beta}}{(t_2 + |x_2-y_2|)^{m+\beta}}
\end{displaymath}
and
\begin{displaymath}
\Big( \iint_{\widehat J}\, \Big| \int_J s_{t_1, t_2}(x_1, x_2, y_1, y_2)\,dy_2\Big|^2 \frac{dx_2dt_2}{t_2} \Big)^{1/2} \lesssim \frac{t_1^{\alpha}}{(t_1 + |x_1-y_1|)^{n+\alpha}} |J|^{1/2}.
\end{displaymath}

We also assume the following combinations of Carleson and H\"older conditions: For every cube $I \subset \R^n$ and $J \subset \R^m$ there holds that
\begin{align*}
\Big( \iint_{\widehat I}\, \Big| \int_I [s_{t_1, t_2}(x_1, x_2, y_1, y_2)-s_{t_1, t_2}(x_1, x_2, y_1, z_2)]&\,dy_1\Big|^2 \frac{dx_1dt_1}{t_1} \Big)^{1/2} \\ &\lesssim |I|^{1/2} \frac{|y_2-z_2|^{\beta}}{(t_2 + |x_2-y_2|)^{m+\beta}}
\end{align*}
whenever $|y_2-z_2| < t_2/2$, and
\begin{align*}
\Big( \iint_{\widehat J}\, \Big| \int_J [s_{t_1, t_2}(x_1, x_2, y_1, y_2)-s_{t_1, t_2}(x_1, x_2, z_1, y_2)]&\,dy_2\Big|^2 \frac{dx_2dt_2}{t_2} \Big)^{1/2} \\ &\lesssim \frac{|y_1-z_1|^{\alpha}}{(t_1 + |x_1-y_1|)^{n+\alpha}} |J|^{1/2}
\end{align*}
whenever $|y_1-z_1| < t_1/2$.
\end{ass}

\begin{ass}[Bi-parameter Carleson condition]
Let $\mathcal{D} = \mathcal{D}_n \times \mathcal{D}_m$, where $\mathcal{D}_n$ is a dyadic grid in $\R^n$ and $\mathcal{D}_m$ is a dyadic grid in $\R^m$.
For $I \in \mathcal{D}_n$, let $W_I = I \times (\ell(I)/2, \ell(I))$ be the associated Whitney region. We define the numbers
\begin{displaymath}
C_{IJ}^{\mathcal{D}} = \iint_{W_J} \iint_{W_I} |\theta_{t_1, t_2} 1(x_1,x_2)|^2 \frac{dx_1dt_1}{t_1}\frac{dx_2dt_2}{t_2}.
\end{displaymath}
We assume the following Carleson condition: for every $\mathcal{D} = \mathcal{D}_n \times \mathcal{D}_m$ there holds that
\begin{displaymath}
\mathop{\sum_{I \times J \in \mathcal{D}}}_{I \times J \subset \Omega} C_{IJ}^{\mathcal{D}} \lesssim |\Omega|
\end{displaymath}
for all sets $\Omega \subset \R^{n+m}$ such that $|\Omega| < \infty$ and such that for every $x \in \Omega$ there
exists $I \times J \in \mathcal{D}$ so that $x \in I \times J \subset \Omega$.
\end{ass}
\begin{rem}
For the readers convenience the necessity of the bi-parameter Carleson condition is discussed in Appendix \ref{sec:nec}.
\end{rem}

We can now formulate our main theorem, which we aim to prove in an efficient way using the modern tools.

\begin{thm}\label{thm:main}
The square function estimate
\begin{displaymath}
\iint_{\R^{m+1}_+} \iint_{\R^{n+1}_+} |\theta_{t_1, t_2} f(x_1, x_2)|^2 \frac{dx_1dt_1}{t_1}\frac{dx_2dt_2}{t_2} \lesssim \|f\|_{L^2(\R^{n+m})}^2
\end{displaymath}
holds with a constant depending only on the Assumptions formulated above.
\end{thm}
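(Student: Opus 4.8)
The plan is to carry the probabilistic averaging strategy of \cite{MM} into the bi-parameter world. I would first fix two \emph{independent} families of random dyadic grids $\mathcal{D}_n^{\omega_1}$ on $\R^n$ and $\mathcal{D}_m^{\omega_2}$ on $\R^m$, in the manner of Nazarov--Treil--Volberg and Hyt\"onen, and introduce the usual notion of $r$-good cube in each factor (with $r$ large, to be fixed). Iterating the one-parameter averaging identity over good Whitney regions from \cite{MM} in the two grids separately --- the two randomizations being independent, this is just Fubini --- reduces Theorem~\ref{thm:main} to the bound
\begin{displaymath}
\Exp_{\omega_1}\Exp_{\omega_2}\mathop{\sum_{I\in\mathcal{D}_n^{\omega_1},\ I\ \good}}_{J\in\mathcal{D}_m^{\omega_2},\ J\ \good}\iint_{W_J}\iint_{W_I}|\theta_{t_1,t_2}f(x_1,x_2)|^2\,\frac{dx_1dt_1}{t_1}\frac{dx_2dt_2}{t_2}\lesssim\|f\|_{L^2(\R^{n+m})}^2,
\end{displaymath}
where we may take $f$ in a convenient dense class and the grids suitably truncated, so that all sums below are finite. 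The gain is that on the region of integration one has $t_1\approx\ell(I)$, $t_2\approx\ell(J)$ with $I$ and $J$ \emph{good}.

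Next, fix good cubes $I,J$, expand $f=\sum_{K_1,K_2}\Delta_{K_1}^1\Delta_{K_2}^2 f$ into bi-parameter martingale differences, and split the double sum according to whether $\ell(K_1)<\ell(I)$ or $\ell(K_1)\ge 2^r\ell(I)$, and likewise for $\ell(K_2)$ versus $\ell(J)$; the finitely many intermediate scales in each variable are absorbed, via the pure size estimate, into the estimates below. This yields four pieces. For the ``inside/inside'' piece one uses that $\Delta_{K_1}^1\Delta_{K_2}^2 f$ has vanishing mean in $y_1$ and in $y_2$: subtracting the four ``corner'' values of the kernel and invoking the full H\"older estimate produces a gain $(\ell(K_1)/\ell(I))^{\alpha}(\ell(K_2)/\ell(J))^{\beta}$ together with spatial decay in $\dist(K_1,I)/\ell(I)$ and $\dist(K_2,J)/\ell(J)$; summing against $\sum_{K_1,K_2}\|\Delta_{K_1}^1\Delta_{K_2}^2 f\|_2^2=\|f\|_2^2$ over all $I,J,K_1,K_2$ by a routine Schur/almost-orthogonality argument gives the desired bound.

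For the two ``mixed'' pieces, say $\ell(K_1)\ge 2^r\ell(I)$ and $\ell(K_2)<\ell(J)$, goodness of $I$ forces the $K_1$-telescoping: on the effective $y_1$-support of the kernel, $\sum_{\ell(K_1)\ge 2^r\ell(I)}\Delta_{K_1}^1 f$ equals the $y_1$-average of $f$ over the ancestor $P=I^{(r)}$ up to a term supported off $P$, harmless by the $2^{-r\alpha}$ kernel decay. The piece thus becomes $1_P(y_1)$ times a $y_2$-function assembled from the martingale differences $\Delta_{K_2}^2$, $\ell(K_2)<\ell(J)$; processing each $K_2$ separately and using its vanishing $y_2$-mean to subtract the kernel value at $c_{K_2}$, one applies the first mixed Carleson--H\"older estimate in the variable $t_1$, i.e.
\begin{displaymath}
\Big(\iint_{\widehat P}\Big|\int_P[s_{t_1,t_2}(x_1,x_2,y_1,y_2)-s_{t_1,t_2}(x_1,x_2,y_1,c_{K_2})]\,dy_1\Big|^2\frac{dx_1dt_1}{t_1}\Big)^{1/2}\lesssim|P|^{1/2}\frac{|y_2-c_{K_2}|^{\beta}}{(t_2+|x_2-y_2|)^{m+\beta}};
\end{displaymath}
the remaining integral in $(x_2,t_2,y_2)$ is then an ordinary one-parameter Littlewood--Paley/square-function estimate on $\R^{m+1}_+$, and summing over $J$ and over $P$ (using that the $W_I$ with $I^{(r)}=P$ tile a horizontal slab inside $\widehat P$) yields $\lesssim\|f\|_2^2$. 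The symmetric mixed piece is handled by the symmetric mixed assumptions.

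Finally, in the ``outside/outside'' piece ($\ell(K_1)\ge 2^r\ell(I)$ and $\ell(K_2)\ge 2^r\ell(J)$) the double telescoping collapses the sum, near $I\times J$, to the genuine constant $\langle f\rangle_{I^{(r)}\times J^{(r)}}$ (again up to rapidly decaying errors), so this piece equals $\langle f\rangle_{I^{(r)}\times J^{(r)}}\,\theta_{t_1,t_2}1(x)$ plus error. Its total contribution is controlled by $\sum_{I,J}|\langle f\rangle_{I^{(r)}\times J^{(r)}}|^2\,C_{IJ}^{\mathcal{D}}$, and here --- this is the one genuinely bi-parameter step --- a naive product Carleson bound is insufficient: one must use the full bi-parameter Carleson condition (the control of $\sum_{I\times J\subset\Omega}C_{IJ}^{\mathcal{D}}$ over \emph{all} admissible open sets $\Omega$) and run the Journ\'e/Chang--Fefferman stopping-time argument built on the strong maximal function to obtain $\lesssim\|f\|_2^2$. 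I expect this paraproduct step, together with the bookkeeping that routes each error and mixed term into exactly the right one of the mixed assumptions, to be the main obstacle; the other pieces stay close to their one-parameter counterparts in \cite{MM}. Summing the four pieces and taking expectations over $\omega_1,\omega_2$ then finishes the proof.
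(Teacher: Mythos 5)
Your proposal follows essentially the same strategy as the paper: independent random dyadic grids in the two factors, the averaging identity over good Whitney regions pushed to two parameters, a bi-parameter Haar/martingale decomposition of $f$ split by scale comparison in each variable, full H\"older for the ``inside/inside'' piece, the mixed H\"older/size and mixed Carleson/H\"older estimates for the mixed pieces, telescoping plus the mixed Carleson/size and size estimates for the containment cases, and finally the bi-parameter Carleson assumption combined with the strong maximal function for the full paraproduct $\sum|\ave{f}_{I\times J}|^2 C_{IJ}^{\mathcal{D}}$. The paper is more refined in the case split---each ``$\ge$'' piece is further divided into separated, nested, and comparable/close sub-cases (nine terms for $S_{\ge,\ge}$), and the nested piece is split via $h_{I^{(k)}}=s_I^k+\ave{h_{I^{(k)}}}_{I^{(k-1)}}$ so that the paraproduct has $\ave{f}_{I\times J}$ rather than your $\ave{f}_{I^{(r)}\times J^{(r)}}$---whereas you lump the separated contributions into ``rapidly decaying errors,'' which is heuristically right but glosses over the Schur-type almost-orthogonality sum that is actually needed there; also, the paraproduct step is a level-set Carleson embedding argument with $M_{\mathcal{D}}$ rather than a full Journ\'e/Chang--Fefferman stopping-time (that machinery only appears in the paper's appendix on the necessity of the Carleson condition). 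These are differences of bookkeeping and emphasis, not of approach.
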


\subsection{Strategy of the proof}
Let $w_n = (w_n^i)_{i \in \Z}$ and $w_m =  (w_m^j)_{j \in \Z}$, where $w_n^i \in \{0,1\}^n$ and $w_m^j \in \{0,1\}^m$. Let $\mathcal{D}^0_n$ and $\mathcal{D}^0_m$ be the standard dyadic grids on $\R^n$ and $\R^m$ respectively.
In $\R^n$ we define the new dyadic grid $\mathcal{D}_n = \{I + \sum_{i:\, 2^{-i} < \ell(I)} 2^{-i}w_n^i: \, I \in \mathcal{D}_n^0\} = \{I + w_n: \, I \in \mathcal{D}_n^0\}$, where we simply have defined
$I + w_n := I + \sum_{i:\, 2^{-i} < \ell(I)} 2^{-i}w_n^i$. The dyadic grid $\mathcal{D}_m$ in $\R^m$ is similarly defined. There is a natural product probability structure on $(\{0,1\}^n)^{\Z}$ and $(\{0,1\}^m)^{\Z}$. Therefore, we have the independent
random dyadic grids $\mathcal{D}_n$ and $\mathcal{D}_m$ in $\R^n$ and $\R^m$ respectively. Even if $n=m$ we use two independent grids.

A cube $I \in \mathcal{D}_n$ is called bad if there exists such a cube $\tilde I \in \mathcal{D}_n$ that $\ell(\tilde I) \ge 2^r \ell(I)$ and $d(I, \partial \tilde I) \le \ell(I)^{\gamma_n}\ell(\tilde I)^{1-\gamma_n}$.
Here $\gamma_n = \alpha/(2n + 2\alpha)$, where $\alpha > 0$ appears in the kernel estimates. One notes that
$\pi_{\textrm{good}}^n := \mathbb{P}_{w_n}(I + w_n \textrm{ is good})$ is independent of $I \in \mathcal{D}^0_n$. The parameter $r$ is a fixed constant so that $\pi_{\textrm{good}}^n, \pi_{\textrm{good}}^m > 0$.
Moreover, for a fixed $I \in \mathcal{D}^0_n$
the set $I + w_n$ depends on $w_n^i$ with $2^{-i} < \ell(I)$, while the goodness of $I + w_n$ depends on $w_n^i$ with $2^{-i} \ge \ell(I)$. These notions are thus independent.
Analogous definitions and remarks related to $\mathcal{D}_m$ hold.

Let $h_I$ be an $L^2$ normalized Haar function related to $I \in \mathcal{D}_n$.
This means that $h_I$, $I = I_1 \times \cdots \times I_n$, is one of the $2^n$ functions $h_I^{\eta}$, $\eta = (\eta_1, \ldots, \eta_n) \in \{0,1\}^n$, defined by
\begin{displaymath}
h_I^{\eta} = h_{I_1}^{\eta_1} \otimes \cdots \otimes h_{I_n}^{\eta_n}, 
\end{displaymath}
where $h_{I_i}^0 = |I_i|^{-1/2}1_{I_i}$ and $h_{I_i}^1 = |I_i|^{-1/2}(1_{I_{i, l}} - 1_{I_{i, r}})$ for every $i = 1, \ldots, n$. Here $I_{i,l}$ and $I_{i,r}$ are the left and right
halves of the interval $I_i$ respectively. If $\eta \ne 0$ the Haar function is cancellative: $\int h_I = 0$. The cancellative Haar functions form an orthonormal basis of $L^2(\R^n)$.
If $a \in L^2(\R^n)$ we may thus write $a = \sum_{I \in \mathcal{D}_n} \sum_{\eta \in \{0,1\}^n \setminus \{0\}} \langle a, h_I^{\eta} \rangle h_I^{\eta}$. We suppress the finite $\eta$ summation
and simply write $a = \sum_I \langle a, h_I\rangle h_I$. We may expand a function $f$ defined in $\R^{n+m}$ using the corresponding product basis:
\begin{displaymath}
f = \sum_{I, J} f_{IJ} h_{I \times J} := \sum_{I, J} \langle f, h_{I} \otimes h_{J} \rangle h_{I} \otimes h_{J}.
\end{displaymath}

Let always $I_1, I_2 \in \mathcal{D}_n$ and $J_1, J_2 \in \mathcal{D}_m$. Using independence we see that
\begin{align*}
&\iint_{\R^{m+1}_+} \iint_{\R^{n+1}_+} |\theta_{t_1, t_2} f(x)|^2 \frac{dx_1dt_1}{t_1}\frac{dx_2dt_2}{t_2} \\
&= \frac{1}{\pi_{\textrm{good}}^n\pi_{\textrm{good}}^m} E_{w_n, w_m} \sum_{I_2, J_2 \textup{ good}} \iint_{W_{J_2}} \iint_{W_{I_2}} |\theta_{t_1, t_2} f(x)|^2 \frac{dx_1dt_1}{t_1}\frac{dx_2dt_2}{t_2} \\
&= \frac{1}{\pi_{\textrm{good}}^n\pi_{\textrm{good}}^m} E_{w_n, w_m} \sum_{I_2, J_2 \textup{ good}} \iint_{W_{J_2}} \iint_{W_{I_2}} \Big| \sum_{I_1, J_1} f_{I_1J_1} \theta_{t_1, t_2}h_{I_1 \times J_1}(x)\Big|^2 \frac{dx_1dt_1}{t_1}\frac{dx_2dt_2}{t_2}.
\end{align*}
Notice carefully that the cubes $I_1, J_1$ are not restricted to good cubes.

We conclude that we can fix the dyadic grids $\mathcal{D}_n$ and $\mathcal{D}_m$ and focus on dominating the sum
\begin{align*}
S = \sum_{I_2, J_2 \textup{ good}} \iint_{W_{J_2}} \iint_{W_{I_2}} \Big| \sum_{I_1, J_1} f_{I_1J_1} \theta_{t_1, t_2}h_{I_1 \times J_1}(x_1, x_2)\Big|^2 \frac{dx_1dt_1}{t_1}\frac{dx_2dt_2}{t_2}.
\end{align*}
The first step is to split
\begin{displaymath}
S \lesssim S_{<, <} + S_{\ge, \ge} + S_{<, \ge} + S_{\ge, <},
\end{displaymath}
where
\begin{displaymath}
S_{<, <} = \sum_{I_2, J_2 \textup{ good}}  \iint_{W_{J_2}} \iint_{W_{I_2}} \Big| \mathop{\mathop{\sum_{I_1, J_1}}_{\ell(I_1) < \ell(I_2)}}_{\ell(J_1) < \ell(J_2)} f_{I_1J_1}
\theta_{t_1, t_2}h_{I_1 \times J_1}(x_1, x_2)\Big|^2 \frac{dx_1dt_1}{t_1}\frac{dx_2dt_2}{t_2}
\end{displaymath}
and so on. In the square function setting $<$ is much easier than $\ge$. Indeed, no further splitting is necessary in the $<$ summations. It is only in these summations
that one may utilise the various H\"older type estimates.

However, the terms with $\ge$ have
to be further dominated by three pieces. For example, we dominate
\begin{align*}
S_{\ge, <} \lesssim S_{\ge_{\textup{sep}}, <} + S_{\supset, <} + S_{\sim, <}, 
\end{align*}
where
\begin{align*}
S_{\ge_{\textup{sep}}, <} = \sum_{I_2, J_2 \textup{ good}} \iint_{W_{J_2}} \iint_{W_{I_2}}& \Big| \mathop{\sum_{I_1:\, \ell(I_1) \ge \ell(I_2)}}_{d(I_1, I_2) > \ell(I_2)^{\gamma_n}\ell(I_1)^{1-\gamma_n}} \\ &\times \sum_{J_1:\, \ell(J_1) < \ell(J_2)}
f_{I_1J_1} \theta_{t_1, t_2}h_{I_1 \times J_1}(x)\Big|^2 \frac{dx_1dt_1}{t_1}\frac{dx_2dt_2}{t_2},
\end{align*}
\begin{align*}
S_{\supset, <} = \sum_{I_2, J_2 \textup{ good}} \iint_{W_{J_2}} \iint_{W_{I_2}} \Big| \sum_{I_1:\, I_2 \subsetneq I_1} \sum_{J_1:\, \ell(J_1) < \ell(J_2)}
f_{I_1J_1}  \theta_{t_1, t_2}h_{I_1 \times J_1}(x)\Big|^2 \frac{dx_1dt_1}{t_1}\frac{dx_2dt_2}{t_2}
\end{align*}
and
\begin{align*}
S_{\sim, <} =  \mathop{\sum_{I_1, I_2:\, \ell(I_1) \sim \ell(I_2)}}_{d(I_1,I_2) \lesssim \ell(I_2)} \sum_{J_2} \iint_{W_{J_2}}
 \iint_{W_{I_2}}  \Big| \sum_{J_1:\, \ell(J_1) < \ell(J_2)} f_{I_1J_1}  \theta_{t_1, t_2}h_{I_1 \times J_1}(x)\Big|^2 \frac{dx_1dt_1}{t_1}\frac{dx_2dt_2}{t_2}.
\end{align*}
The goodness was already used in the last term to force the condition $\ell(I_1) \le 2^r\ell(I_2)$. Indeed, initially this term contains the case $I_1 = I_2$ and also those
$I_1$ for which $\ell(I_1) \ge \ell(I_2)$, $I_1 \cap I_2 = \emptyset$ and $d(I_1, I_2) \le \ell(I_2)^{\gamma_n}\ell(I_1)^{1-\gamma_n}$. But in the latter case one has to have
 $\ell(I_1) \le 2^r\ell(I_2)$ by the goodness of $I_2$, since otherwise one would have  $d(I_1, I_2) > \ell(I_2)^{\gamma_n}\ell(I_1)^{1-\gamma_n}$. To move the $I_1$ summation outside the square we also used the fact that
 given $I_2$ there are $\lesssim 1$ cubes $I_1$ with these properties.
 
Goodness will be used one more time in the summations with $\supset$. Elsewhere the goodness
may be discarded.

Naturally, the most difficult term is $S_{\ge, \ge}$, which has to be dominated by nine terms
\begin{displaymath}
S_{\ge, \ge} \lesssim S_{\ge_{\textup{sep}}, \ge_{\textup{sep}}} + S_{\ge_{\textup{sep}}, \supset} + S_{\ge_{\textup{sep}}, \sim} + S_{\supset, \ge_{\textup{sep}}} + S_{\supset, \supset} + S_{\supset, \sim} +
S_{\sim, \ge_{\textup{sep}}} +  S_{\sim, \supset} + S_{\sim, \sim}.
\end{displaymath}
Here things boil down to using the various size conditions instead of the H\"older conditions.
\section{The term $S_{<,<}$}
This term is almost trivial. The full H\"older estimate gives that if $\ell(I_1) < \ell(I_2)$ and $\ell(J_1) < \ell(J_2)$, then
\begin{displaymath}
| \theta_{t_1, t_2}h_{I_1 \times J_1}(x)| \lesssim A_{I_1I_2}|I_2|^{-1/2} \cdot A_{J_1J_2}|J_2|^{-1/2}, \qquad (x_1, t_1) \in W_{I_2}, \, (x_2, t_2) \in W_{J_2}.
\end{displaymath}
Here
\begin{displaymath}
A_{I_1I_2} = \frac{\ell(I_1)^{\alpha/2}\ell(I_2)^{\alpha/2}}{D(I_1, I_2)^{n+\alpha}}|I_1|^{1/2}|I_2|^{1/2},
\end{displaymath}
where $D(I_1, I_2) = \ell(I_1) + \ell(I_2) + d(I_1,I_2)$. Therefore, we have that
\begin{displaymath}
S_{<,<} \lesssim \sum_{J_2} \sum_{I_2} \Big[ \sum_{I_1} A_{I_1I_2} \sum_{J_1} A_{J_1J_2} |f_{I_1J_1}|\Big]^2 \lesssim \|f\|_{L^2(\R^{n+m})}^2,
\end{displaymath}
where we have applied the following well-known Proposition twice.
\begin{prop}\label{prop:Abound}
There holds that
\begin{displaymath}
\sum_{I_1, I_2} A_{I_1I_2} x_{I_1} y_{I_2} \lesssim \Big( \sum_{I_1} x_{I_1}^2 \Big)^{1/2}  \Big( \sum_{I_2} y_{I_2}^2 \Big)^{1/2}
\end{displaymath}
for $x_{I_1}, y_{I_2} \ge 0$. In particular, there holds that
\begin{displaymath}
\sum_{I_2} \Big[ \sum_{I_1} A_{I_1I_2} x_{I_1} \Big]^2 \lesssim \sum_{I_1} x_{I_1}^2.
\end{displaymath}
\end{prop}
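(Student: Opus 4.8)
The plan is to establish the $\ell^2$ bound for the matrix $A=(A_{I_1I_2})_{I_1,I_2\in\mathcal{D}_n}$; by positivity of its entries this is equivalent to both assertions of the proposition. Indeed, the displayed ``in particular'' statement is exactly $\|A\|\lesssim1$ as an operator on $\ell^2(\mathcal D_n)$, and the bilinear bound follows from it by one application of Cauchy--Schwarz (and conversely by taking a supremum over $y$). Since $A_{I_1I_2}=A_{I_2I_1}$ and $A_{I_1I_2}$ depends only on the relative size and position of $I_1$ and $I_2$ -- it is unchanged when both cubes are simultaneously dilated and translated -- the one-sided sums that occur in Schur's test below may be computed after normalising the larger of the two cubes to have unit sidelength.

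The first observation is that one cannot feed $A$ itself into Schur's test. At the generation $\ell(I_1)=2^{-k}\ell(I_2)$ the $\sim 2^{kn}$ cubes $I_1$ lying within distance $\ell(I_2)$ of $I_2$ each contribute $\sim 2^{-k(n+\alpha)/2}$ to $\sum_{I_1}A_{I_1I_2}$, a total $\sim 2^{k(n-\alpha)/2}$ from that generation, and these do not sum over $k$. The remedy is to split $A=\sum_{k\ge0}A^k$ by relative scale: $A^0$ retains the entries with $\ell(I_1)=\ell(I_2)$, and for $k\ge1$ the matrix $A^k$ retains the entries with $\ell(I_1)/\ell(I_2)\in\{2^k,2^{-k}\}$. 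Writing $T^k$ for the operator with matrix $A^k$, it suffices to prove $\|T^k\|\lesssim 2^{-k\alpha/2}$, since then $\|A\|\le\sum_{k\ge0}\|T^k\|\lesssim\sum_{k\ge0}2^{-k\alpha/2}\lesssim1$.

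For $k\ge1$ I would split $A^k$ into its two ``diagonals'' $\ell(I_1)=2^{-k}\ell(I_2)$ and $\ell(I_1)=2^k\ell(I_2)$, which are transposes of one another by the symmetry of $A$, and apply Schur's test to the first. The two one-sided sums are
\[
\sup_{I_2}\sum_{I_1:\, \ell(I_1)=2^{-k}\ell(I_2)}A_{I_1I_2}\qquad\text{and}\qquad \sup_{I_1}\sum_{I_2:\, \ell(I_2)=2^{k}\ell(I_1)}A_{I_1I_2}.
\]
Normalising the larger cube to unit sidelength and splitting the remaining sum into the cubes at distance $\lesssim1$ (there are $\sim 2^{kn}$ of them in the first sum, only $O(1)$ in the second) together with dyadic annuli of cubes at distance $\sim 2^j$ -- on which the number of cubes is $\sim2^{jn}$ times that at unit distance while $D(I_1,I_2)^{n+\alpha}\sim 2^{j(n+\alpha)}$, giving a net $2^{-j\alpha}$ that is summable because $\alpha>0$ -- one finds the first supremum is $\lesssim 2^{k(n-\alpha)/2}$ and the second is $\lesssim 2^{-k(n+\alpha)/2}$. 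Schur's test then yields $\|T^k\|\lesssim\big(2^{k(n-\alpha)/2}\cdot 2^{-k(n+\alpha)/2}\big)^{1/2}=2^{-k\alpha/2}$. The case $k=0$ is easier: $A^0$ is symmetric and has row sums $\lesssim1$ directly (again by $\alpha>0$), so $\|T^0\|\lesssim1$.

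The main obstacle is exactly this first observation: the crude Schur bound for $A$ diverges, and the content of the argument is that the divergence sits entirely on the ``many small cubes'' side, while the ``few large cubes'' side is very summable, so that splitting by relative scale and taking the geometric mean of the two one-sided Schur sums restores a gain $2^{-k\alpha/2}$ per scale. The rest -- counting dyadic subcubes and dyadic annuli -- is routine.
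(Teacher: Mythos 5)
The paper does not prove Proposition~\ref{prop:Abound}; it invokes it as well known and gives no argument, so there is nothing in the text to compare your proof against. Your proof is correct and is indeed the standard argument for such ``almost-diagonal'' matrices. Your diagnosis of the obstruction is exactly right: at relative scale $\ell(I_1)=2^{-k}\ell(I_2)$ the column sum $\sup_{I_2}\sum_{I_1}A_{I_1I_2}\sim 2^{k(n-\alpha)/2}$ diverges in $k$ because there are too many small cubes, while the row sum $\sup_{I_1}\sum_{I_2}A_{I_1I_2}\sim 2^{-k(n+\alpha)/2}$ is strongly convergent; the geometric mean $2^{-k\alpha/2}$ from Schur's test on the scale-$k$ block is what makes the sum over $k$ converge, and this uses $\alpha>0$ in two distinct places (the dyadic-annulus sum inside each Schur computation, and the final sum over relative scales). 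The reduction of the bilinear form to the $\ell^2\to\ell^2$ operator norm via one Cauchy--Schwarz, the transpose symmetry between the two diagonals of $A^k$, and the scale/translation invariance of $A_{I_1I_2}$ that lets you normalize the larger cube are all in order. For what it is worth, the same calculation can be packaged without the explicit decomposition $A=\sum_k A^k$ by running the \emph{weighted} Schur test on $A$ directly with weight $w_I=\ell(I)^{s}$ for any fixed $s\in\big((n-\alpha)/2,(n+\alpha)/2\big)$ (e.g. $s=n/2$): then $\sum_{I_2}A_{I_1I_2}w_{I_2}\lesssim w_{I_1}\big(\sum_{k\ge0}2^{k(n-\alpha)/2-ks}+\sum_{k\ge1}2^{-k(n+\alpha)/2+ks}\big)\lesssim w_{I_1}$, both geometric series converging precisely because $\alpha>0$, and symmetry supplies the other Schur sum. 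This is the same mechanism as yours, just with the scale-by-scale gain absorbed into the choice of weight.
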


\section{The term $S_{\ge,<}$}
\subsubsection*{Term $S_{\ge_{\textup{sep}}, <}$}
The mixed H\"older and size estimate gives that in this case
\begin{displaymath}
| \theta_{t_1, t_2}h_{I_1 \times J_1}(x)| \lesssim \frac{\ell(I_2)^{\alpha}}{d(I_1,I_2)^{n+\alpha}} |I_1|^{1/2}\cdot A_{J_1J_2}|J_2|^{-1/2}, \qquad (x_1, t_1) \in W_{I_2}, \, (x_2, t_2) \in W_{J_2}.
\end{displaymath}
If $d(I_1,I_2) \ge \ell(I_1)$, then $D(I_1, I_2) \lesssim d(I_1, I_2)$. In this case, one has
\begin{displaymath}
\frac{\ell(I_2)^{\alpha}}{d(I_1,I_2)^{n+\alpha}} |I_1|^{1/2} \lesssim A_{I_1I_2}|I_2|^{-1/2}.
\end{displaymath}
If $d(I_1, I_2) \le \ell(I_1)$, then $D(I_1, I_2) \lesssim \ell(I_1)$. The condition $d(I_1, I_2) > \ell(I_2)^{\gamma_n}\ell(I_1)^{1-\gamma_n}$ together with the identity $\gamma_n(n+\alpha) = \alpha/2$ gives that
also in this case
\begin{displaymath}
\frac{\ell(I_2)^{\alpha}}{d(I_1,I_2)^{n+\alpha}} |I_1|^{1/2}  \lesssim \frac{\ell(I_1)^{\alpha/2}\ell(I_2)^{\alpha/2}}{\ell(I_1)^{n+\alpha}}|I_1|^{1/2} \lesssim A_{I_1I_2}|I_2|^{-1/2}.
\end{displaymath}
Therefore, we may conclude that $S_{\ge_{\textup{sep}}, <} \lesssim \|f\|_{L^2(\R^{n+m})}^2$ using the same argument as with the term $S_{<,<}$.

\subsubsection*{Term $S_{\supset, <}$}
Define
\begin{displaymath}
s^k_I = -1_{(I^{(k-1)})^c}  \langle h_{I^{(k)}}\rangle_{I^{(k-1)}} + \mathop{\sum_{\tilde I \in \textup{ch}(I^{(k)})}}_{\tilde I \ne I^{(k-1)}} 1_{\tilde I} h_{I^{(k)}}
\end{displaymath}
so that
\begin{displaymath}
h_{I^{(k)}} = s^k_I + \langle h_{I^{(k)}}\rangle_{I^{(k-1)}}.
\end{displaymath}
The thing to note about $s^k_I$ is that spt$\,s^k_I \subset (I^{(k-1)})^c$ and $|s^k_I| \lesssim |I^{(k)}|^{-1/2}$.
We also denote $f_{J_1} = \langle f, h_{J_1}\rangle$ so that $f_{J_1}(y_1) = \int f(y_1,y_2)h_{J_1}(y_2)\,dy_2$, $y_1 \in \R^n$. 

We now estimate
\begin{displaymath}
S_{\supset, <} \lesssim S_{\supset_{\textup{mod}}, <} + S_{\textup{Car}, <},
\end{displaymath}
where
\begin{displaymath}
S_{\supset_{\textup{mod}}, <} =  \sum_{I, J_2 \textup{ good}} \iint_{W_{J_2}} \iint_{W_{I}} \Big| \sum_{k=1}^{\infty} \sum_{J_1:\, \ell(J_1) < \ell(J_2)} f_{I^{(k)}J_1} \theta_{t_1, t_2}(s_I^k \otimes h_{J_1})(x)\Big|^2 \frac{dx_1dt_1}{t_1}\frac{dx_2dt_2}{t_2}
\end{displaymath}
and
\begin{align*}
S_{\textup{Car}, <} =  \sum_{I, J_2} \iint_{W_{J_2}} \iint_{W_{I}} \Big| \sum_{J_1:\, \ell(J_1) < \ell(J_2)} &  \theta_{t_1, t_2}(1 \otimes h_{J_1})(x) \\ &\times \sum_{k=1}^{\infty} \langle \Delta_{I^{(k)}} f_{J_1} \rangle_{I^{(k-1)}}
\Big|^2 \frac{dx_1dt_1}{t_1}\frac{dx_2dt_2}{t_2}.
\end{align*}
Note that $S_{\textup{Car}, <}$ collapses to
\begin{displaymath}
S_{\textup{Car}, <} = \sum_{I, J_2} \iint_{W_{J_2}} \iint_{W_{I}} \Big| \sum_{J_1:\, \ell(J_1) < \ell(J_2)} \langle f_{J_1} \rangle_I   \theta_{t_1, t_2}(1 \otimes h_{J_1})(x)  \Big|^2 \frac{dx_1dt_1}{t_1}\frac{dx_2dt_2}{t_2}.
\end{displaymath}

If $k > r$ we have by the goodness of $I$ that
\begin{align*}
\ell(I)^{\alpha} \int_{(I^{(k-1)})^c} \frac{dy_1}{|y_1-x_1|^{n+\alpha}} \le \ell(I)^{\alpha}& \int_{B(x_1, d(I, (I^{(k-1)})^c))^c} \frac{dy_1}{|y_1-x_1|^{n+\alpha}} \\ &\lesssim \ell(I)^{\alpha} d(I, (I^{(k-1)})^c)^{-\alpha} \lesssim 2^{-\alpha k/2}.
\end{align*}
Using the mixed H\"older and size estimate this yields that for $k > r$ there holds that
\begin{displaymath}
|\theta_{t_1, t_2}(s_I^k \otimes h_{J_1})(x)| \lesssim 2^{-\alpha k/2}|I^{(k)}|^{-1/2}  A_{J_1J_2} |J_2|^{-1/2}, \qquad (x_1, t_1) \in W_I, \, (x_2, t_2) \in W_{J_2}.
\end{displaymath}
In the case $k \le r$ we use the estimate
\begin{displaymath}
\int \frac{\ell(I)^{\alpha}}{(\ell(I) + |x_1-y_1|)^{n+\alpha}}\,dy_1 \lesssim \ell(I)^{-n} |3I| + \ell(I)^{\alpha} \int_{I^c} \frac{dy_1}{|y_1-c_I|^{n+\alpha}} \lesssim 1 \sim 2^{-\alpha k/2}
\end{displaymath}
to arrive at the same conclusion.
Therefore, we have that
\begin{align*}
 S_{\supset_{\textup{mod}}, <} &\lesssim \sum_k 2^{-\alpha k/2} \sum_I \frac{|I|}{|I^{(k)}|} \sum_{J_2} \Big[ \sum_{J_1:\, \ell(J_1) < \ell(J_2)} A_{J_1J_2}|f_{I^{(k)}J_1}| \Big]^2 \\
 &\lesssim  \sum_k 2^{-\alpha k/2} \sum_{U, J_1} \frac{|f_{UJ_1}|^2}{|U|} \sum_{I:\, I^{(k)} = U} |I| = \sum_k 2^{-\alpha k/2} \sum_{U, J_1} |f_{UJ_1}|^2 \lesssim \|f\|_{L^2(\R^{n+m})}^2.
\end{align*}

We then deal with $S_{\textup{Car}, <}$. Minkowski's integral inequality yields that $S_{\textup{Car}, <}$ can be dominated by
\begin{displaymath}
\sum_{J_2} \iint_{W_{J_2}} \Big[ \sum_{J_1:\, \ell(J_1) < \ell(J_2)} \Big( \sum_I |\langle f_{J_1} \rangle_I|^2 \iint_{W_I} |\theta_{t_1, t_2}(1 \otimes h_{J_1})(x)|^2 \frac{dx_1dt_1}{t_1} \Big)^{1/2} \Big]^2 \frac{dx_2dt_2}{t_2}.
\end{displaymath}
\begin{lem}\label{lem:Car1}
Let $J_1, J_2$ be such that $\ell(J_1) < \ell(J_2)$, and let $(x_2, t_2) \in W_{J_2}$.
The numbers
\begin{displaymath}
\iint_{W_I} |\theta_{t_1, t_2}(1 \otimes h_{J_1})(x)|^2 \frac{dx_1dt_1}{t_1}, \qquad I \in \mathcal{D}_n,
\end{displaymath}
satisfy the Carleson condition
\begin{align*}
\sum_{\tilde I \subset I} \iint_{W_{\tilde I}} |\theta_{t_1, t_2}(1 \otimes h_{J_1})(x)|^2 \frac{dx_1dt_1}{t_1} = \iint_{\widehat I} |\theta_{t_1, t_2}(1 \otimes &h_{J_1})(x)|^2 \frac{dx_1dt_1}{t_1} \\ &\lesssim (A_{J_1J_2}|J_2|^{-1/2})^2 |I|.
\end{align*}
\end{lem}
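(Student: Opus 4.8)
Here is my plan for proving Lemma \ref{lem:Car1}.

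The equality is merely the decomposition of the Carleson box into Whitney regions: the sets $W_{\tilde I}=\tilde I\times(\ell(\tilde I)/2,\ell(\tilde I))$ with $\tilde I\subset I$ dyadic are pairwise disjoint, and those with $\ell(\tilde I)=2^{-k}\ell(I)$ have union $I\times(2^{-k-1}\ell(I),2^{-k}\ell(I))$, so summing over all $k\ge 0$ recovers $\widehat I=I\times(0,\ell(I))$ up to a null set. Hence it remains to bound $\iint_{\widehat I}|\theta_{t_1,t_2}(1\otimes h_{J_1})(x)|^2\,\frac{dx_1\,dt_1}{t_1}$, and for this I would split $1=1_{3I}+1_{(3I)^c}$ and treat the two pieces separately. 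First it is convenient to record that, with $c_{J_1}$ the center of $J_1$, one has $|y_2-c_{J_1}|\lesssim\ell(J_1)$ for $y_2\in J_1$, and $t_2+|x_2-y_2|\sim D(J_1,J_2)$ for $y_2\in J_1$ and $(x_2,t_2)\in W_{J_2}$ (because $t_2\sim\ell(J_2)$ and $d(J_1,J_2)\le|x_2-y_2|\lesssim d(J_1,J_2)+\ell(J_2)$, together with $\ell(J_1)<\ell(J_2)$).

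For the local part I would use the cancellation $\int h_{J_1}=0$ to write
\[
\theta_{t_1,t_2}(1_{3I}\otimes h_{J_1})(x)=\int_{J_1}h_{J_1}(y_2)\Big[\int_{3I}\big(s_{t_1,t_2}(x,(y_1,y_2))-s_{t_1,t_2}(x,(y_1,c_{J_1}))\big)\,dy_1\Big]\,dy_2,
\]
then apply Minkowski's integral inequality to pull the $L^2(\widehat I,dx_1\,dt_1/t_1)$-norm inside the $dy_2$-integral, use $L^2(\widehat I)\le L^2(\widehat{3I})$, and invoke the mixed Carleson and H\"older assumption with the cube $3I$. This should give
\[
\Big(\iint_{\widehat I}|\theta_{t_1,t_2}(1_{3I}\otimes h_{J_1})(x)|^2\,\tfrac{dx_1\,dt_1}{t_1}\Big)^{1/2}\lesssim |I|^{1/2}\int_{J_1}|h_{J_1}(y_2)|\,\frac{|y_2-c_{J_1}|^{\beta}}{(t_2+|x_2-y_2|)^{m+\beta}}\,dy_2.
\]
Using $|h_{J_1}|\le|J_1|^{-1/2}$ and the two comparisons above, the $dy_2$-integral is $\lesssim|J_1|^{1/2}\ell(J_1)^{\beta}D(J_1,J_2)^{-(m+\beta)}$; since $\ell(J_1)<\ell(J_2)$ gives $\ell(J_1)^{\beta}\le\ell(J_1)^{\beta/2}\ell(J_2)^{\beta/2}$, this is $\lesssim|I|^{1/2}A_{J_1J_2}|J_2|^{-1/2}$, which is the claimed bound for this piece.

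For the far part I would again use $\int h_{J_1}=0$ and then the mixed H\"older and size estimate (H\"older in $y_2$, size in $y_1$). For $(x_1,t_1)\in\widehat I$ and $y_1\in(3I)^c$ one has $|x_1-y_1|\gtrsim\ell(I)>t_1$, so $t_1+|x_1-y_1|\sim|x_1-y_1|$, whence
\[
|\theta_{t_1,t_2}(1_{(3I)^c}\otimes h_{J_1})(x)|\lesssim\Big(\int_{(3I)^c}\frac{t_1^{\alpha}}{|x_1-y_1|^{n+\alpha}}\,dy_1\Big)\Big(\int_{J_1}|h_{J_1}(y_2)|\,\frac{|y_2-c_{J_1}|^{\beta}}{(t_2+|x_2-y_2|)^{m+\beta}}\,dy_2\Big)\lesssim\frac{t_1^{\alpha}}{\ell(I)^{\alpha}}\,A_{J_1J_2}|J_2|^{-1/2},
\]
using $\int_{(3I)^c}|x_1-y_1|^{-n-\alpha}\,dy_1\lesssim\ell(I)^{-\alpha}$ for $x_1\in I$ and the $dy_2$-estimate from the previous paragraph. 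Squaring and integrating, $\iint_{\widehat I}(t_1/\ell(I))^{2\alpha}\,\frac{dx_1\,dt_1}{t_1}=|I|\int_0^{\ell(I)}(t_1/\ell(I))^{2\alpha}\,\frac{dt_1}{t_1}\sim|I|$, so this part also contributes $\lesssim(A_{J_1J_2}|J_2|^{-1/2})^2|I|$, completing the argument.

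I do not expect a serious obstacle. The one technical point to watch is that the H\"older-type assumptions are stated under $|y_2-c_{J_1}|<t_2/2$, whereas here one only has $|y_2-c_{J_1}|\lesssim\ell(J_1)\lesssim\ell(J_2)\sim t_2$; I would absorb this by a routine chaining along a bounded number of intermediate points at distance $<t_2/2$, costing only a dimensional constant. The real content of the lemma is the recognition that the local term is exactly what the mixed Carleson--H\"older assumption controls, and that separating by the fixed dilate $3I$ (rather than $I$ itself) is what supplies the $t_1^{\alpha}$ gain needed for the $dt_1/t_1$-integral to converge in the far term.
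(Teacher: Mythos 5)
Your proof follows the paper's argument exactly: split $1 = 1_{3I} + 1_{(3I)^c}$, apply Minkowski's integral inequality and the mixed Carleson--H\"older estimate for the local part, and the mixed H\"older--size estimate together with $\int_0^{\ell(I)} t_1^{2\alpha-1}\,dt_1 \sim \ell(I)^{2\alpha}$ for the far part, both contributions being $\lesssim (A_{J_1J_2}|J_2|^{-1/2})^2|I|$. Your closing remark about chaining to absorb the constraint $|y_2-z_2|<t_2/2$ when $m>1$ is a legitimate technical point that the paper leaves implicit.
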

\begin{proof}
Fix the cube $I \in \mathcal{D}_n$. We estimate
\begin{align*}
\iint_{\widehat I} |\theta_{t_1, t_2}(1 \otimes h_{J_1})(x)|^2 \frac{dx_1dt_1}{t_1} \lesssim \iint_{\widehat {3I}} |\theta_{t_1, t_2}&(1_{3I} \otimes h_{J_1})(x)|^2 \frac{dx_1dt_1}{t_1} \\
&+ \iint_{\widehat I} |\theta_{t_1, t_2}(1_{(3I)^c} \otimes h_{J_1})(x)|^2 \frac{dx_1dt_1}{t_1}.
\end{align*}
Minkowski's integral inequality yields that
\begin{align*}
\iint_{\widehat {3I}}  &|\theta_{t_1, t_2}(1_{3I} \otimes h_{J_1})(x)|^2 \frac{dx_1dt_1}{t_1} \\ &\le |J_1|^{-1} \Big[ \int_{J_1} \Big( \iint_{\widehat {3I}} \Big| \int_{3I} [s_{t_1,t_2}(x,y) - s_{t_1,t_2}(x, (y_1, c_{J_1}))] \,dy_1 \Big|^2
\frac{dx_1dt_1}{t_1} \Big)^{1/2} dy_2 \Big)^2 \\
&\lesssim (A_{J_1J_2}|J_2|^{-1/2})^2 |I|,
\end{align*}
where the last estimate follows from the mixed Carleson and H\"older conditions.

The mixed H\"older and size estimate gives that
\begin{displaymath}
|\theta_{t_1, t_2}(1_{(3I)^c} \otimes h_{J_1})(x)| \lesssim t_1^{\alpha}\int_{I^c} \frac{dy_1}{|y_1-c_I|^{n+\alpha}} \cdot A_{J_1J_2}|J_2|^{-1/2} \lesssim t_1^{\alpha}\ell(I)^{-\alpha} \cdot A_{J_1J_2}|J_2|^{-1/2}
\end{displaymath}
yielding
\begin{align*}
\iint_{\widehat I} |\theta_{t_1, t_2}(1_{(3I)^c} \otimes h_{J_1})(x)|^2 \frac{dx_1dt_1}{t_1} \lesssim (A_{J_1J_2}|J_2|^{-1/2})^2 \cdot |I| \ell(I)&^{-2\alpha} \int_0^{\ell(I)} t_1^{2\alpha-1}\,dt_1 \\
&\lesssim  (A_{J_1J_2}|J_2|^{-1/2})^2 |I|.
\end{align*}
This completes the proof of the Lemma.
\end{proof}
Using the above lemma we now see that
\begin{align*}
S_{\textup{Car}, <} \lesssim \sum_{J_2} \Big[ \sum_{J_1:\, \ell(J_1) < \ell(J_2)}  A_{J_1J_2} \|f_{J_1}\|_{L^2(\R^n)} \Big]^2 \lesssim \sum_{J_1} \|f_{J_1}\|_{L^2(\R^n)}^2 \lesssim \|f\|_{L^2(\R^{n+m})}^2.
\end{align*}

\subsubsection*{Term $S_{\sim, <}$}
Here we have using the mixed H\"older and size estimate that for $(x_1, t_1) \in W_{I_2}, \, (x_2, t_2) \in W_{J_2}$ there holds that
\begin{displaymath}
| \theta_{t_1, t_2}h_{I_1 \times J_1}(x)| \lesssim  |I_1|^{1/2}\ell(I_2)^{-n}\cdot A_{J_1J_2}|J_2|^{-1/2} \sim |I_2|^{-1/2}\cdot A_{J_1J_2}|J_2|^{-1/2}.
\end{displaymath}
The last step uses the fact that here $\ell(I_1) \sim \ell(I_2)$. Using the fact that given $I_1$ we have $\lesssim 1$ cubes $I_2$ such that
$\ell(I_1) \sim \ell(I_2)$ and $d(I_1, I_2) \lesssim \ell(I_1)$, we have that
\begin{displaymath}
S_{\sim, <} \lesssim \sum_{I_1} \sum_{J_2} \Big[ \sum_{J_1} A_{J_1J_2} |f_{I_1J_1}| \Big]^2 \lesssim \|f\|_{L^2(\R^{n+m})}^2.
\end{displaymath}
\section{The term $S_{<, \ge}$}
This term is completely symmetric with the term $S_{\ge,<}$, and therefore the symmetric mixed H\"older and size estimate
and the symmetric mixed Carleson and H\"older estimate yield the bound $S_{<, \ge} \lesssim \|f\|_{L^2(\R^{n+m})}^2$.
\section{The term $S_{\ge, \ge}$}
\subsubsection*{Term $S_{\supset, \supset}$}
We need to bound
\begin{displaymath}
\sum_{I, J \textup{ good}} \iint_{W_J} \iint_{W_I} \Big| \sum_{k=1}^{\infty} \sum_{l=1}^{\infty} f_{I^{(k)}J^{(l)}} \theta_{t_1,t_2}(h_{I^{(k)}} \otimes h_{J^{(l)}})(x)\Big|^2 \frac{dx_1dt_1}{t_1} \frac{dx_2dt_2}{t_2}.
\end{displaymath}
Splitting $h_{I^{(k)}} = s^k_I + \langle h_{I^{(k)}}\rangle_{I^{(k-1)}}$ and $h_{J^{(l)}} = s^l_J + \langle h_{J^{(l)}}\rangle_{J^{(l-1)}}$ we dominate
\begin{displaymath}
S_{\supset, \supset} \lesssim S_{\supset_{\textup{mod}}, \supset_{\textup{mod}}} + S_{\supset_{\textup{mod}}, \textup{Car}} + S_{\textup{Car}, \supset_{\textup{mod}}} + S_{\textup{Car}, \textup{Car}},
\end{displaymath}
where
\begin{displaymath}
S_{\supset_{\textup{mod}}, \supset_{\textup{mod}}} = \sum_{I, J \textup{ good}} \iint_{W_J} \iint_{W_I} \Big| \sum_{k=1}^{\infty} \sum_{l=1}^{\infty} f_{I^{(k)}J^{(l)}} \theta_{t_1,t_2}(s^k_I \otimes s^l_J)(x)\Big|^2,
 \frac{dx_1dt_1}{t_1} \frac{dx_2dt_2}{t_2}
\end{displaymath}
\begin{displaymath}
S_{\supset_{\textup{mod}}, \textup{Car}} = \sum_{I, J \textup{ good}} \iint_{W_J} \iint_{W_I} \Big| \sum_{k=1}^{\infty} \langle f_{I^{(k)}} \rangle_J \theta_{t_1,t_2}(s^k_I \otimes 1)(x)\Big|^2
 \frac{dx_1dt_1}{t_1} \frac{dx_2dt_2}{t_2},
\end{displaymath}
\begin{displaymath}
S_{\textup{Car}, \supset_{\textup{mod}}} = \sum_{I, J \textup{ good}} \iint_{W_J} \iint_{W_I} \Big| \sum_{l=1}^{\infty} \langle f_{J^{(l)}} \rangle_I \theta_{t_1,t_2}(1 \otimes s^l_J)(x)\Big|^2
 \frac{dx_1dt_1}{t_1} \frac{dx_2dt_2}{t_2}
\end{displaymath}
and
\begin{displaymath}
S_{\textup{Car}, \textup{Car}} = \sum_{I, J \textup{ good}} |\langle f \rangle_{I \times J}|^2 \iint_{W_J} \iint_{W_I} |\theta_{t_1, t_2}1(x)|^2  \frac{dx_1dt_1}{t_1} \frac{dx_2dt_2}{t_2}.
\end{displaymath}

We begin with the term $S_{\textup{Car}, \textup{Car}}$. We have using the bi-parameter Carleson condition that
\begin{align*}
S_{\textup{Car}, \textup{Car}} \le \sum_{I, J} |\langle f \rangle_{I \times J}|^2 C_{IJ}^{\mathcal{D}} &= 2 \int_0^{\infty} \mathop{\sum_{I,J}}_{|\langle f \rangle_{I \times J}| > t} C_{IJ}^{\mathcal{D}} t\,dt \\
&\lesssim \int_0^{\infty} \mathop{\sum_{I,J}}_{I \times J \subset \{M_{\mathcal{D}}f > t\}} C_{IJ}^{\mathcal{D}} t\,dt \\
&\lesssim \int_0^{\infty} |\{M_{\mathcal{D}}f > t\}|t\,dt \lesssim \|M_{\mathcal{D}}f\|_{L^2(\R^{n+m})}^2 \lesssim \|f\|_{L^2(\R^{n+m})}^2.
\end{align*}

The size estimate gives that
\begin{displaymath}
|\theta_{t_1,t_2}(s^k_I \otimes s^l_J)(x)| \lesssim 2^{-\alpha k/2} |I^{(k)}|^{-1/2} 2^{-\beta l/2} |J^{(l)}|^{-1/2}.
\end{displaymath}
Indeed, this can be seen using the same argument that was used to estimate the term $S_{\supset_{\textup{mod}}, <}$. Similarly, this
then leads to the bound
\begin{displaymath}
S_{\supset_{\textup{mod}}, \supset_{\textup{mod}}} \lesssim \sum_{k,\,l}   2^{-\alpha k/2} 2^{-\beta l/2} \sum_{U,\,L} |f_{UL}|^2 \frac{1}{|U|}\sum_{I:\, I^{(k)} = U} |I| \cdot \frac{1}{|L|} \sum_{J: \, J^{(l)} = L} |J| \lesssim \|f\|_{L^2(\R^{n+m})}^2.
\end{displaymath}

This leaves us to deal with $S_{\textup{Car}, \supset_{\textup{mod}}}$ (the term $S_{\supset_{\textup{mod}}, \textup{Car}}$ is handled symmetrically).
One begins by dominating $S_{\textup{Car}, \supset_{\textup{mod}}}$ with
\begin{displaymath}
\sum_{J \textup{ good}} \iint_{W_J} \Big[ \sum_{\ell=1}^{\infty} \Big( \sum_I |\langle f_{J^{(l)}} \rangle_I |^2 \iint_{W_I} |\theta_{t_1,t_2}(1 \otimes s^l_J)(x)|^2  \frac{dx_1dt_1}{t_1} \Big)^{1/2} \Big]^2 \frac{dx_2dt_2}{t_2}.
\end{displaymath}
\begin{lem}
Let $J \in \mathcal{D}_{m, \textup{ good}}$, $(x_2, t_2) \in W_J$ and $\ell \in \N$ be fixed. The numbers
\begin{displaymath}
\iint_{W_I} |\theta_{t_1,t_2}(1 \otimes s^l_J)(x)|^2  \frac{dx_1dt_1}{t_1}, \qquad I \in \mathcal{D}_n,
\end{displaymath}
satisfy the Carleson condition
\begin{align*}
\sum_{\tilde I \subset I} \iint_{W_{\tilde I}} |\theta_{t_1,t_2}(1 \otimes s^l_J)(x)|^2  \frac{dx_1dt_1}{t_1} = \iint_{\widehat I} |\theta_{t_1,t_2}(1 \otimes &s^l_J)(x)|^2  \frac{dx_1dt_1}{t_1} \\
&\lesssim 2^{-\beta \ell} |J^{(l)}|^{-1} |I|.
\end{align*}
\end{lem}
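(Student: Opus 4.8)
The plan is to mimic the proof of Lemma~\ref{lem:Car1}, replacing the Haar function $h_{J_1}$ (in the second variable) by the function $s_J^l$, whose crucial features are that $\supp s_J^l \subset (J^{(l-1)})^c$ and $|s_J^l| \lesssim |J^{(l)}|^{-1/2}$, and which satisfies $\int s_J^l = 0$. Since the Carleson condition telescopes, it suffices to estimate the single Whitney-box sum $\iint_{\widehat I} |\theta_{t_1,t_2}(1 \otimes s^l_J)(x)|^2 \frac{dx_1dt_1}{t_1}$ for a fixed $I \in \mathcal{D}_n$, $J \in \mathcal{D}_{m,\textup{good}}$, $(x_2,t_2) \in W_J$ and $\ell \in \N$. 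First I would split $1 = 1_{3I} + 1_{(3I)^c}$ in the first variable, exactly as in Lemma~\ref{lem:Car1}, obtaining a local piece and a tail piece.

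For the tail piece $\theta_{t_1,t_2}(1_{(3I)^c} \otimes s^l_J)(x)$, I would use the mixed H\"older and size estimate in the \emph{first} variable (comparing $s_{t_1,t_2}(x,(y_1,y_2))$ with $s_{t_1,t_2}(x,(c_I,y_2))$, legitimate because $(x_1,t_1)\in W_I$ forces $|y_1-c_I| \gtrsim t_1/2$ when $y_1 \in (3I)^c$), together with the bound $|s_J^l| \lesssim |J^{(l)}|^{-1/2}$ and the decay of $\int_{(3I)^c} s_{t_2}$-type kernel against $s_J^l$. Actually the cleanest route is to use the H\"older estimate in the first variable plus the size estimate in the second variable: the second-variable size factor $t_2^\beta/(t_2+|x_2-y_2|)^{m+\beta}$ integrated against $|s_J^l| \lesssim |J^{(l)}|^{-1/2}$ over $(J^{(l-1)})^c$, combined with the goodness of $J$, yields a gain of $2^{-\beta \ell/2}$, just as in the estimate for $S_{\supset_{\textup{mod}},<}$; meanwhile the first-variable H\"older factor $t_1^\alpha \int_{I^c} |y_1-c_I|^{-n-\alpha}\,dy_1 \lesssim t_1^\alpha \ell(I)^{-\alpha}$ contributes, after the $\int_0^{\ell(I)} t_1^{2\alpha-1}\,dt_1$ integration, a factor $|I|$. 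Squaring gives the claimed $2^{-\beta\ell}|J^{(l)}|^{-1}|I|$.

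For the local piece $\theta_{t_1,t_2}(1_{3I} \otimes s^l_J)(x)$ over $\widehat{3I}$, I would exploit the zero average of $s_J^l$ in the second variable to write, using $\int s_J^l = 0$,
\begin{displaymath}
\theta_{t_1,t_2}(1_{3I} \otimes s^l_J)(x) = \int_{(J^{(l-1)})^c} \Big[ \int_{3I} s_{t_1,t_2}(x,(y_1,y_2))\,dy_1 \Big] s_J^l(y_2)\,dy_2,
\end{displaymath}
and then apply the mixed Carleson $\times$ size condition (the second one in Assumption~2, with $J$ replaced by $3I$) which controls $\big(\iint_{\widehat{3I}} |\int_{3I} s_{t_1,t_2}\,dy_1|^2 \frac{dx_1dt_1}{t_1}\big)^{1/2}$ by $|I|^{1/2} t_2^\beta/(t_2+|x_2-y_2|)^{m+\beta}$. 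Combining this via Minkowski's integral inequality with $|s_J^l| \lesssim |J^{(l)}|^{-1/2}$ on $(J^{(l-1)})^c$ and using the goodness of $J$ to sum the second-variable size factor over $(J^{(l-1)})^c$ with a $2^{-\beta\ell/2}$ gain, I again reach $\lesssim 2^{-\beta\ell}|J^{(l)}|^{-1}|I|$ after squaring.

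The main obstacle is organising the second-variable estimates so that the decay factor $2^{-\beta\ell}$ genuinely appears: one must combine the bound $|s_J^l|\lesssim |J^{(l)}|^{-1/2}$, the support condition $\supp s_J^l \subset (J^{(l-1)})^c$, the goodness of $J$ (which gives $d(J,(J^{(l-1)})^c) \gtrsim \ell(J^{(l)})^{1-\gamma_m}\ell(J)^{\gamma_m}$ hence the integral of the second-variable size kernel over $(J^{(l-1)})^c$ is $\lesssim 2^{-\beta\ell/2}$), and in the local piece one must additionally observe that although $s_J^l$ does not have the localisation needed for a H\"older estimate, its vanishing integral suffices to bring in the mixed Carleson $\times$ size condition in the first variable. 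Everything else is a routine repetition of the computations already carried out for Lemma~\ref{lem:Car1} and for $S_{\supset_{\textup{mod}},<}$.
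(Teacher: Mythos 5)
Your decomposition ($1 = 1_{3I} + 1_{(3I)^c}$ in the first variable) and the final numerical estimates match the paper's one-line instruction, namely to repeat Lemma~\ref{lem:Car1} but with the mixed Carleson and size estimate (local piece) and the full size condition (tail piece), together with the goodness of $J$ to gain $2^{-\beta\ell/2}$ after integrating the second-variable size kernel over $(J^{(l-1)})^c$. However, two of your justifications are wrong, even though the computations they precede happen to be correct.

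First, your claim that $\int s^l_J = 0$ is false. By construction $s^l_J = h_{J^{(l)}} - \langle h_{J^{(l)}}\rangle_{J^{(l-1)}}$, and since $\langle h_{J^{(l)}}\rangle_{J^{(l-1)}}$ is a nonzero constant, $s^l_J$ equals this constant on $(J^{(l)})^c$ and is therefore not even integrable. The display you write after invoking this "zero average" is just the definition of $\theta_{t_1,t_2}(1_{3I}\otimes s^l_J)$ restricted to $\supp s^l_J \subset (J^{(l-1)})^c$, and uses no cancellation at all, so the subsequent Minkowski plus mixed Carleson $\times$ size argument is fine -- but the appeal to a vanishing integral (repeated in your final paragraph) is erroneous and should be deleted. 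The conceptual point is in fact the opposite: it is precisely because $s^l_J$ carries \emph{no} cancellation that one must trade the mixed Carleson and H\"older condition of Lemma~\ref{lem:Car1} for the mixed Carleson and size condition here.

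Second, your treatment of the tail piece $\theta_{t_1,t_2}(1_{(3I)^c}\otimes s^l_J)$ misidentifies which kernel estimate is in play. You propose to use a H\"older estimate in the first variable, ``comparing $s_{t_1,t_2}(x,(y_1,y_2))$ with $s_{t_1,t_2}(x,(c_I,y_2))$, legitimate because $|y_1-c_I|\gtrsim t_1/2$'' -- but the mixed H\"older and size estimate requires the \emph{opposite} inequality $|y_1-z_1|<t_1/2$, so this comparison is not legitimate, and moreover there is nothing cancellative to subtract against since $1_{(3I)^c}$ is not mean-zero on $\R^n$ (the difference term would be multiplied by $\int_{(3I)^c}dy_1 = \infty$). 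What you actually need, and what the paper says, is the \emph{full size condition}: for $x_1\in I$, $t_1<\ell(I)$ and $y_1\in(3I)^c$ one has $t_1^\alpha/(t_1+|x_1-y_1|)^{n+\alpha}\lesssim t_1^\alpha/|y_1-c_I|^{n+\alpha}$. The factor $t_1^\alpha\int_{I^c}|y_1-c_I|^{-n-\alpha}\,dy_1\lesssim t_1^\alpha\ell(I)^{-\alpha}$ that you write down is exactly this size-estimate bound, not a H\"older bound, so your computation is correct but mislabelled. With these two points corrected your argument coincides with the paper's.
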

\begin{proof}
The proof follows the idea of the proof of Lemma \ref{lem:Car1}. The difference is that one uses the mixed Carleson and size estimate and the full size condition.
Of course, one also uses the fact that since $J$ is good we have the familiar estimate
\begin{displaymath}
\int_{(J^{(l-1)})^c} \frac{\ell(J)^{\beta}}{(\ell(J) + |x_2-y_2|)^{m+\beta}}\,dy_2 \lesssim 2^{-\beta l/2}.
\end{displaymath}
\end{proof}
We conclude that
\begin{displaymath}
S_{\textup{Car}, \supset_{\textup{mod}}} \lesssim \sum_l 2^{-\beta l/2} \sum_L \|f_L\|_{L^2(\R^n)}^2 \frac{1}{|L|} \sum_{J: \, J^{(l)} = L} |J| \lesssim \|f\|_{L^2(\R^{n+m})}^2.
\end{displaymath}
\subsubsection*{Rest of the terms}
The remaining terms contain no new philosophies -- they only constitute an easy mixture of the already used techniques. For this reason we only shortly indicate how they are bounded.

The term $S_{\ge_{\textup{sep}}, \ge_{\textup{sep}}}$ is estimated using the size estimate. The correct bound is established similarly like the $\R^n$ part
was estimated in $S_{\ge_{\textup{sep}}, <}$. The bound $A_{I_1I_2}A_{J_1J_2}$ is then summed like in $S_{<, <}$.

The term $S_{\sim, \sim}$ is estimated using the size estimate after which the summation is trivial (see $S_{\sim, <}$).

The term $S_{\supset, \sim}$ is first split into $S_{\supset_{\textup{mod}}, \sim}$ and $S_{\textup{Car}, \sim}$. The term $S_{\supset_{\textup{mod}}, \sim}$ is handled using the size estimate.
The term $S_{\textup{Car}, \sim}$ is handled using the mixed Carleson and size estimate and the size condition. The term $S_{\sim, \supset}$ is of course symmetric.

The term $S_{\supset, \ge_{\textup{sep}}}$ is very similar to $S_{\supset, <}$. One simply uses the size estimate and the mixed Carleson and size estimate, and then the techniques
from the estimation of $S_{\ge_{\textup{sep}}, <}$ to get the $A_{J_1J_2}$. Otherwise there is no difference. The term $S_{\ge_{\textup{sep}}, \supset}$ is symmetric.

Finally, the terms $S_{\ge_{\textup{sep}}, \sim}$ and $S_{\sim, \ge_{\textup{sep}}}$ are bounded using the size condition (and the techniques from the estimation of $S_{\ge_{\textup{sep}}, <}$).
This concludes our proof of Theorem \ref{thm:main}.

\appendix
\section{Necessity of the bi-parameter Carleson condition}\label{sec:nec}
For the readers convenience we prove here that in the case of model operators our formulation of the bi-parameter Carleson condition is necessary for the square function bound in $L^2(\R^{n+m})$.

Suppose $\theta^n_{t_1}$ has a kernel $s^n_{t_1}(x_1,y_1)$, $\theta^m_{t_2}$ has a kernel $s^m_{t_2}(x_2,y_2)$, $x_1, y_1 \in \R^n$, $x_2, y_2 \in \R^m$, $t_1, t_2 > 0$. We assume that these
satisfy the size condition. Moreover, we assume the corresponding $L^2$ square function bounds in $\R^n$ and $\R^m$.
Let $\theta_{t_1, t_2} := \theta_{t_1}^n \otimes \theta_{t_2}^m$ satisfy an $L^2$ square function bound in $\R^{n+m}$.

Let $\mathcal{D} = \mathcal{D}_n \times \mathcal{D}_m$, where $\mathcal{D}_n$ is a dyadic grid in $\R^n$ and $\mathcal{D}_m$ is a dyadic grid in $\R^m$.
Let $\Omega \subset \R^{n+m}$ be such a bounded set that for every $x \in \Omega$ there exists $I \times J \in \mathcal{D}$ so that $x \in I \times J \subset \Omega$.
We will show that
\begin{equation}\label{eq:nec}
\mathop{\sum_{I \times J \in \mathcal{D}}}_{I \times J \subset \Omega} \iint_{W_J} \iint_{W_I} |\theta_{t_1, t_2} 1(x_1,x_2)|^2 \frac{dx_1dt_1}{t_1}\frac{dx_2dt_2}{t_2} \lesssim |\Omega|.
\end{equation}

Let $M_{\mathcal{D}}$ denote the strong maximal function related to the grid $\mathcal{D}$ and let $M$ denote the strong maximal function. Define
$\tilde \Omega = \{M_{\mathcal{D}}1_{\Omega} > 1/2\}$ and $\widehat \Omega = \{M1_{\tilde \Omega} > c\}$ for a small enough dimensional constant $c = c(n,m)$.
Since $|\widehat \Omega| \lesssim |\tilde \Omega| \lesssim |\Omega|$, it is enough by the square function bound in $L^2(\R^{n+m})$ to show that
\begin{displaymath}
\mathop{\sum_{I \times J \in \mathcal{D}}}_{I \times J \subset \Omega} \iint_{W_J} \iint_{W_I} |\theta_{t_1, t_2} 1_{{\widehat \Omega}^c}(x_1,x_2)|^2 \frac{dx_1dt_1}{t_1}\frac{dx_2dt_2}{t_2} \lesssim |\Omega|.
\end{displaymath}

In the proof we will select a plethora of various maximal dyadic cubes. We will be slightly lax with the justification of their existence -- for this minor detail we refer to \cite{HM}.

For every $J \in \mathcal{D}_m$ we let $\mathcal{F}_J$ consist of the maximal $F \in \mathcal{D}_n$ for which $F \times J \subset \tilde \Omega$. Then we define $F_J := \bigcup_{F \in \mathcal{F}_J} 2F$.
We will separately bound
\begin{displaymath}
S_1 := \mathop{\sum_{I \times J \in \mathcal{D}}}_{I \times J \subset \Omega} \iint_{W_J} \iint_{W_I} |\theta_{t_1, t_2}(1_{{\widehat \Omega}^c}1_{F_J})(x_1,x_2)|^2 \frac{dx_1dt_1}{t_1}\frac{dx_2dt_2}{t_2}
\end{displaymath}
and
\begin{displaymath}
S_2 := \mathop{\sum_{I \times J \in \mathcal{D}}}_{I \times J \subset \Omega} \iint_{W_J} \iint_{W_I} |\theta_{t_1, t_2}(1_{{\widehat \Omega}^c}1_{F_J^c})(x_1,x_2)|^2 \frac{dx_1dt_1}{t_1}\frac{dx_2dt_2}{t_2}.
\end{displaymath}

We deal with $S_1$ first. To this end, first fix $J \in \mathcal{D}_m$ and $(x_2, t_2) \in W_J$. We may forget the condition $I \times J \subset \Omega$ in the sum $S_1$. Therefore, we need to first bound
\begin{displaymath}
S_J(x_2, t_2) := \iint_{\R^{n+1}_+} |\theta_{t_1, t_2}(1_{{\widehat \Omega}^c}1_{F_J})(x_1,x_2)|^2 \frac{dx_1dt_1}{t_1}.
\end{displaymath}
We write
\begin{displaymath}
\theta_{t_1, t_2}(1_{{\widehat \Omega}^c}1_{F_J})(x_1,x_2) = \int s^m_{t_2}(x_2, y_2) \theta^n_{t_1}(1_{{\widehat \Omega}^c}(\cdot, y_2)1_{F_J})(x_1)\,dy_2
\end{displaymath}
after which Minkowski's integral inequality yields that
\begin{align*}
S_J(x_2,t_2) &\le \Big[ \int |s^m_{t_2}(x_2, y_2)| \Big( \iint_{\R^{n+1}_+} |\theta^n_{t_1}(1_{{\widehat \Omega}^c}(\cdot, y_2)1_{F_J})(x_1)|^2 \frac{dx_1dt_1}{t_1}\Big)^{1/2}dy_2\Big]^2 \\
&\lesssim \Big[ \int \frac{\ell(J)^{\beta}}{(\ell(J) + |x_2-y_2|)^{m+\beta}} \|1_{{\widehat \Omega}^c}(\cdot, y_2)1_{F_J}\|_{L^2(\R^n)}\,dy_2\Big]^2 \\
&\lesssim \int \frac{\ell(J)^{\beta}}{(\ell(J) + |x_2-y_2|)^{m+\beta}} \|1_{{\widehat \Omega}^c}(\cdot, y_2)1_{F_J}\|_{L^2(\R^n)}^2\,dy_2 \\
&\lesssim \int \frac{\ell(J)^{\beta}}{|y_2-c_J|^{m+\beta}} \|1_{{\widehat \Omega}^c}(\cdot, y_2)1_{F_J}\|_{L^2(\R^n)}^2\,dy_2 \\
&= \int 1_{F_J}(y_1) \int \frac{\ell(J)^{\beta}}{|y_2-c_J|^{m+\beta}} 1_{{\widehat \Omega}^c}(y_1, y_2)\,dy_2\,dy_1.
\end{align*}

Next, we have that
\begin{align*}
\sum_J &\iint_{W_J} S_J(x_2,t_2) \frac{dx_2dt_2}{t_2} \\ &\lesssim \sum_J |J| \int 1_{F_J}(y_1) \int \frac{\ell(J)^{\beta}}{|y_2-c_J|^{m+\beta}} 1_{{\widehat \Omega}^c}(y_1, y_2)\,dy_2\,dy_1 \\
&= \iint \Big( \mathop{\sum_{J:\, x_2 \in J}}_{y_1 \in F_J}  \int \frac{\ell(J)^{\beta}}{|y_2-c_J|^{m+\beta}} 1_{{\widehat \Omega}^c}(y_1, y_2)\,dy_2 \Big) dy_1\,dx_2.
\end{align*}
Notice that the outer integral can be restricted to
\begin{displaymath}
\bigcup_J (F_J \times J) \subset \widehat \Omega.
\end{displaymath}
Therefore, it suffices to show the pointwise bound
\begin{displaymath}
\mathop{\sum_{J:\, x_2 \in J}}_{y_1 \in F_J}  \int \frac{\ell(J)^{\beta}}{|y_2-c_J|^{m+\beta}} 1_{{\widehat \Omega}^c}(y_1, y_2)\,dy_2 \lesssim 1, \,\, (y_1, x_2) \in \bigcup_J (F_J \times J).
\end{displaymath}

Fix $(y_1, x_2)$ and let $T = T(y_1, x_2)$ be the maximal dyadic cube $J \in \mathcal{D}_m$ such that $x_2 \in J $ and $y_1 \in F_J$. In particular, $y_1 \in 2F$
for some $F \in \mathcal{F}_T$. If there would hold $y_2 \in 2T$, then $(y_1, y_2) \in 2F \times 2T \subset \widehat \Omega$, since $F \times T \subset \tilde \Omega$. But $1_{{\widehat \Omega}^c}(y_1, y_2) \ne 0$ so
$y_2 \in (2T)^c$. Now we have that
\begin{align*}
\int \frac{\ell(J)^{\beta}}{|y_2-c_J|^{m+\beta}} 1_{{\widehat \Omega}^c}(y_1, y_2)1_{(2T)^c}(y_2) \,dy_2 &\lesssim \int_{T^c} \frac{\ell(J)^{\beta}}{|y_2-c_T|^{m+\beta}}\, dy_2 \\
&\lesssim \Big(\frac{\ell(J)}{\ell(T)}\Big)^{\beta}.
\end{align*}
Therefore, we may bound
\begin{displaymath}
\mathop{\sum_{J:\, x_2 \in J}}_{y_1 \in F_J}  \int \frac{\ell(J)^{\beta}}{|y_2-c_J|^{m+\beta}} 1_{{\widehat \Omega}^c}(y_1, y_2)\,dy_2 \lesssim \sum_{J:\, x_2 \in J \subset T} \Big(\frac{\ell(J)}{\ell(T)}\Big)^{\beta}
= \sum_{j=0}^{\infty} 2^{-\beta j} \lesssim 1.
\end{displaymath}
We have thus proved that
\begin{displaymath}
S_1 \lesssim \iint_{\widehat \Omega} dy_1\,dx_2 = |\widehat \Omega| \lesssim |\Omega|.
\end{displaymath}

We will then bound $S_2$. This time we first fix $I \in \mathcal{D}_n$. Let $\mathcal{G}_I$ consist of maximal $G \in \mathcal{D}_m$ for which
$I \times G \subset \Omega$, and $I_G \in \mathcal{D}_n$ be the maximal cube for which $I_G \supset I$ and
$I_G \times G \subset \tilde \Omega$. Let $(x_1, t_1) \in W_I$. We write
\begin{align*}
S_I(x_1, t_1) &:= \sum_{J:\, I \times J \subset \Omega} \iint_{W_J} |\theta_{t_1, t_2}(1_{{\widehat \Omega}^c}1_{F_J^c})(x_1,x_2)|^2 \frac{dx_2dt_2}{t_2} \\
&= \sum_{G \in \mathcal{G}_I} \sum_{J: \, J \subset G} \iint_{W_J} |\theta_{t_1, t_2}(1_{{\widehat \Omega}^c}1_{F_J^c})(x_1,x_2)|^2 \frac{dx_2dt_2}{t_2}.
\end{align*}
Here we have that $I_G \times J \subset I_G \times G \subset \tilde \Omega$. Therefore, there exists $F \in \mathcal{F}_J$ such that $I_G \subset F$. This means that
$2I_G \subset 2F \subset F_J$. We will use this via $1_{F_J^c}(y_1) \le 1_{(2I_G)^c}(y_1)$. 

We have that
\begin{align*}
|\theta_{t_1, t_2}(1_{{\widehat \Omega}^c}1_{F_J^c})(x_1,x_2)| &\le \int 1_{(2I_G)^c}(y_1) |s^n_{t_1}(x_1,y_1)| |\theta^m_{t_2}(1_{{\widehat \Omega}^c}(y_1, \cdot))(x_2)|\,dy_1.
\end{align*}
Using this we may estimate
\begin{align*}
S_I(&x_1, t_1)  \\ &\le \sum_{G \in \mathcal{G}_I} \iint_{\widehat G} \Big[ \int 1_{(2I_G)^c}(y_1) |s^n_{t_1}(x_1,y_1)| |\theta^m_{t_2}(1_{{\widehat \Omega}^c}(y_1, \cdot))(x_2)|\,dy_1\Big]^2  \frac{dx_2dt_2}{t_2} \\
&\le \Big[ \int  |s^n_{t_1}(x_1,y_1)| \Big( \sum_{G \in \mathcal{G}_I} 1_{(2I_G)^c}(y_1) \iint_{\widehat G} |\theta^m_{t_2}(1_{{\widehat \Omega}^c}(y_1, \cdot))(x_2)|^2  \frac{dx_2dt_2}{t_2}\Big)^{1/2}dy_1\Big]^2 \\
&\lesssim \Big[ \int \frac{\ell(I)^{\alpha}}{(\ell(I) + |x_1-y_1|)^{n+\alpha}} \Big( \sum_{G \in \mathcal{G}_I} 1_{(2I_G)^c}(y_1) |G| \Big)^{1/2}dy_1\Big]^2 \\
&\lesssim \sum_{G \in \mathcal{G}_I} |G| \int \frac{\ell(I)^{\alpha}}{(\ell(I) + |x_1-y_1|)^{n+\alpha}}  1_{(2I_G)^c}(y_1)\,dy_1 \\
&\lesssim \sum_{G \in \mathcal{G}_I} |G| \int_{I_G^c} \frac{\ell(I)^{\alpha}}{|y_1-c_{I_G}|^{n+\alpha}}\,dy_1 \\
&\lesssim \sum_{G \in \mathcal{G}_I} |G| \Big(\frac{\ell(I)}{\ell(I_G)}\Big)^{\alpha}.
\end{align*}

We now recall the following dyadic instance of Journ\'e's lemma:
\begin{thm}[Journ\'e's lemma]\label{thm:Journe}
If $\omega:\N\to\R_+$ is decreasing and $\sum_{k=0}^{\infty} w(k) < \infty$, then
\begin{equation*}
  \sum_{\substack{R\subset\Omega\\ \textup{2-maximal} }}\omega(\operatorname{emb}_1(R;\Omega))\times|R|
  \leq 2\sum_{k=0}^\infty\omega(k)\times|\Omega|.
\end{equation*}
A  dyadic rectangle $R=I\times J\subset\Omega$ is 2-maximal if $I\times\tilde{J}\not\subset\Omega$ for any dyadic $\tilde{J}\supsetneq J$, and
$\operatorname{emb}_1(R;\Omega):=\sup\{k:R^{(k,0)}\subset\tilde\Omega\}$, $R^{(k,0)} := I^{(k)} \times J$.
\end{thm}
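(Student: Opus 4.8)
The plan is to group the $2$-maximal rectangles by their embedding exponent, reducing the whole estimate to one uniform packing bound, and then to prove that bound by Journ\'e's covering argument.

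\emph{Reduction.} Since $\omega$ is decreasing with $\sum_{k\ge 0}\omega(k)<\infty$, set
\[
N_k:=\mathop{\sum_{R=I\times J\subset\Omega\textup{ 2-maximal}}}_{\operatorname{emb}_1(R;\Omega)=k}|R|,\qquad k\in\No ,
\]
so that, partitioning the rectangles according to the value of $\operatorname{emb}_1(\cdot;\Omega)$ (which is finite because $|\tilde\Omega|<\infty$),
\[
\mathop{\sum_{R\subset\Omega}}_{\textup{2-maximal}}\omega(\operatorname{emb}_1(R;\Omega))\,|R|=\sum_{k=0}^\infty\omega(k)\,N_k .
\]
It therefore suffices to prove the uniform bound $N_k\le|\tilde\Omega|$ for every $k\in\No$: combined with $|\tilde\Omega|\le 2|\Omega|$ this gives the left‑hand side above $\le|\tilde\Omega|\sum_k\omega(k)\le 2|\Omega|\sum_k\omega(k)$, which is precisely the assertion. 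Note that the passage through $N_k$ must be lossless in $k$, since $\omega$ is an arbitrary summable weight.

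\emph{Geometry for fixed $k$.} Fix $k$ and a $2$-maximal $R=I\times J\subset\Omega$ with $\operatorname{emb}_1(R;\Omega)=k$. Then the enlarged rectangle $R^{(k,0)}=I^{(k)}\times J$ lies in $\tilde\Omega$ while $R^{(k+1,0)}=I^{(k+1)}\times J$ does not, and $|R^{(k,0)}|=2^{nk}|R|$. I would use two elementary counting facts: (i) the assignment $R\mapsto R^{(k,0)}$ is at most $2^{nk}$-to-one, since from the pair $(Q,J):=R^{(k,0)}$ one recovers $R$ only by choosing the first cube $I$ among the $2^{nk}$ generation-$k$ descendants of $Q$; and (ii) for a fixed first cube $I$, the occurring second cubes $J$ — the maximal dyadic cubes with $I\times J\subset\Omega$ — are pairwise disjoint. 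Summing $|R^{(k,0)}|=2^{nk}|R|$ over the relevant rectangles, the desired $N_k\le|\tilde\Omega|$ is equivalent to the packing statement $\sum_R|R^{(k,0)}|\le 2^{nk}|\tilde\Omega|$; since each $R^{(k,0)}\subset\tilde\Omega$, this says that the enlarged rectangles cover $\tilde\Omega$ with average multiplicity at most $2^{nk}$.

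\emph{The packing estimate.} This is the content of Journ\'e's covering lemma and the only genuine difficulty. The obstruction is that in the bi-parameter setting the rectangles $R^{(k,0)}$ truly overlap — the ``diagonal'' configurations of product theory — so the estimate can be obtained neither by adding disjoint pieces nor slice by slice in either variable. I would proceed by grouping the relevant rectangles according to the common value $2^j$ of $\ell(I^{(k)})$: for fixed $j$ the first coordinates $I^{(k)}$ involved are pairwise disjoint dyadic cubes of side $2^j$, and counting those lying in a given slice of $\tilde\Omega$, together with (i) and (ii), yields the clean fixed-scale bound $\sum_{\ell(I^{(k)})=2^j}|R^{(k,0)}|\le 2^{nk}|\tilde\Omega|$. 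One then sums over $j$; the gain that makes this scale sum collapse to a constant independent of $k$ — rather than merely being finite — is supplied by the first-coordinate maximality encoded in $R^{(k+1,0)}\not\subset\tilde\Omega$, i.e.\ by each $R^{(k,0)}$ being a rectangle inside $\tilde\Omega$ that cannot be dilated further in the first variable, which prevents the relevant scales from accumulating over a set of positive measure. I expect this scale summation to be the main obstacle: it is exactly here that one must exploit that $\tilde\Omega$ is a maximal-function level set and not an arbitrary open set. The argument can be carried out as in \cite{HM} (cf.\ also \cite{Ma1}); everything else is routine bookkeeping.
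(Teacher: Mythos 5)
The paper itself does not prove Theorem~\ref{thm:Journe}: the lemma is stated with the word ``recall'' and is used without proof, the appendix deferring minor bookkeeping to \cite{HM}. So there is no in-paper argument to compare your proposal against; it has to stand on its own.

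Your reduction to a bound $N_k\le|\tilde\Omega|$ uniform in $k$ is a legitimate route, and indeed a slightly stronger statement than the customary Abel-summation reformulation of Journ\'e's lemma (namely $\sum_{\operatorname{emb}_1(R;\Omega)\le K}|R|\lesssim(K+1)|\Omega|$, which already suffices by summation by parts). The preliminary geometry is also fine: the map $R\mapsto R^{(k,0)}$ is $2^{nk}$-to-one, for fixed $I$ the $2$-maximal $J$'s are pairwise disjoint, and for a single scale $\ell(I^{(k)})=2^j$ the cubes $I^{(k)}$ are pairwise disjoint, giving the fixed-scale packing bound $\sum_{\ell(I^{(k)})=2^j}|R^{(k,0)}|\le 2^{nk}|\tilde\Omega|$.

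The proof, however, stops precisely where Journ\'e's lemma has content. The fixed-scale bound uses only disjointness of same-generation dyadic cubes; it does not use $2$-maximality of $J$, does not use the first-coordinate maximality $R^{(k+1,0)}\not\subset\tilde\Omega$, and does not use that $\tilde\Omega=\{M_{\mathcal D}1_\Omega>1/2\}$. Summed naively over the scales $j$ it yields $(\text{number of scales})\cdot 2^{nk}|\tilde\Omega|$, which is useless, and you do not supply the mechanism that trades the sum over scales for a bounded quantity. ``The gain \dots\ is supplied by the first-coordinate maximality \dots; the argument can be carried out as in \cite{HM}'' is an appeal to authority, not a proof: nothing in your text shows that a point of $\tilde\Omega$ is covered by rectangles $R^{(k,0)}$ from only boundedly many scales, or that the covers decay geometrically across scales, or any other device that would finish the estimate. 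The required step is a genuine stopping-time argument organised around the maximal first-coordinate extensions $I^{(\operatorname{emb}_1(R))}\times J\subset\tilde\Omega$, which crucially exploits the maximal-function structure of $\tilde\Omega$ to convert ``cannot be dilated further in the first variable'' into a measure gain. That step is the heart of Journ\'e's lemma, and it is entirely absent here; as written, the proposal is an outline with the central argument missing (as you yourself acknowledge). A secondary, smaller issue: you invoke $|\tilde\Omega|\le 2|\Omega|$ without justification — the dyadic strong maximal function at level $1/2$ does give $|\tilde\Omega|\lesssim|\Omega|$, but the precise constant $2$ needs an argument if one wants the constant $2\sum_k\omega(k)$ as stated.
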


Using this we may now bound
\begin{align*}
S_2 = \sum_I \iint_{W_I} S_I(x_1,t_1) \frac{dx_1dt_1}{t_1} &\lesssim \sum_I \sum_{G \in \mathcal{G}_I} |I \times G| \Big(\frac{\ell(I)}{\ell(I_G)}\Big)^{\alpha} \\
&\le \mathop{\sum_{I \times G \subset \Omega}}_{I \times G \textup{ 2-maximal}} 2^{-\alpha \operatorname{emb}_1(I \times G;\Omega)} |I\times G| \lesssim |\Omega|.
\end{align*}
We have shown \eqref{eq:nec}.

\end{document}